\theoremstyle{plain} \numberwithin{equation}{section}
\newtheorem{theo}{Theorem}[section]
\newtheorem{coro}[theo]{Corollary}
\newtheorem{prop}[theo]{Proposition}
\newtheorem{lemm}[theo]{Lemma}
\theoremstyle{definition}
\newtheorem*{defi}{Definition}
\newtheorem{rema}[theo]{Remark}
\def\C{\mathbb C}
\def\QC{\mathbb C}
\def\fln{{\mathcal Flags}(\C^n)}
\def\Tn{T}
\def\Y{\mathcal{Y}} 
\def\HT{H_{\Tn}^{*}} 
\def\HS{H_{\S1}^{*}} 
\def\S1{S}
\def\q{r}
\def\f{\theta}
\def\xi{p}
\def\bxi{\check{p}}
\newcommand{\hsm}{{\hspace{1mm}}}
\newcommand{\Flags}{{\mathcal{F}\ell ags}}
\DeclareMathOperator{\diag}{diag}
\begin{document}
\title{The equivariant cohomology rings of Peterson varieties}
\author[Y. Fukukawa]{Yukiko Fukukawa} \author[M. Harada]{Megumi Harada} \author[M. Masuda]{Mikiya Masuda}
\address{Department of Mathematics, Osaka City University, Sumiyoshi-ku, Osaka 558-8585, Japan}
\email{yukiko.fukukawa@gmail.com}
\address{Department of Mathematics and Statistics, McMaster University, 1280 Main Street West, Hamilton, Ontario L8S4K1, Canada}
\email{Megumi.Harada@math.mcmaster.ca} 
\address{Department of Mathematics, Osaka City University, Sumiyoshi-ku, Osaka 558-8585, Japan}
\email{masuda@sci.osaka-cu.ac.jp}
\date{\today}
\thanks{The first author was supported by JSPS Research Fellowships for Young Scientists.}
\thanks{The second author was partially supported by an NSERC
  Discovery Grant and an Early Researcher Award from the Ministry of
  Research and Innovation of Ontario.}
\thanks{The third author was partially supported by Grant-in-Aid for Scientific Research 25400095}
\begin{abstract}
The main result of this note gives an efficient presentation of the
$S^1$-equivariant cohomology ring of Peterson varieties (in type $A$)
as a quotient of a polynomial ring by an ideal $\mathcal{J}$, in the spirit 
of the well-known Borel presentation of the cohomology of the flag
variety. Our result simplifies previous presentations given by
Harada-Tymoczko and Bayegan-Harada. In particular, our result gives an
affirmative answer to 
a conjecture of Bayegan and Harada that the
defining ideal $\mathcal{J}$ is generated by quadratics. 
\end{abstract}
\maketitle 

\setcounter{tocdepth}{1}
\tableofcontents

\section{Introduction}

The main result of this paper is an explicit and efficient
presentation of the $S^1$-equivariant cohomology ring\footnote{In this
  note, all cohomology rings are with coefficients in $\C$.} of type $A$ Peterson
varieties in terms of generators and relations, in the spirit of the
well-known Borel presentation of the cohomology of the flag
variety. Our presentation is significantly simpler than the
computations given in \cite{HT2} (respectively \cite{BH}) which uses the Monk formula (respectively Giambelli formula)
for type $A$ Peterson varieties.
In particular, our result gives an affirmative answer 
to the conjecture formulated in \cite[Remark 3.12]{BH} by showing
that the defining ideal
for the $S^1$-equivariant cohomology ring of type $A$ Peterson varieties can be
generated by quadratic polynomials. 

We briefly recall the setting of our results. \textbf{Peterson
 varieties} in type $A$ can be 
defined as the following subvariety $\Y$ of
$\mathcal{F}\ell ags(\C^n)$:
\begin{equation}\label{eq:def intro}
\Y := \{ V_\bullet \hsm \mid \hsm  NV_i \subseteq
V_{i+1} \textup{ for all } i = 1, \ldots, n-1\} 
\end{equation}
where $V_\bullet$ denotes a nested sequence 
$0 \subseteq V_1 \subseteq V_2 \subseteq \cdots \subseteq V_{n-1} \subseteq V_n = \C^n$ of subspaces of $\C^n$ and $\dim_\C V_i = i$ for all $i$ and 
$N: \C^n \to \C^n$ denotes the principal nilpotent operator.
These varieties have been much studied due to its relation to 
the quantum cohomology of the flag variety \cite{Kos96,
  Rie03}. Thus it is natural to study their topology, e.g. the
structure of their (equivariant) cohomology rings. 

There is a natural circle subgroup of $U(n,\C)$ which acts
on $\Y$ (recalled in Section~\ref{sec2}).
The inclusion of $\Y$
into $\Flags(\C^n)$ induces a natural ring homomorphism
\begin{equation}\label{eq:intro-proj}
H^*_T(\mathcal{F}\ell ags(\C^n)) \to H^*_{S^1}(\Y)
\end{equation} 
where $T$ is the subgroup of diagonal matrices of $U(n,\C)$ acting in the
usual way on $\Flags(\C^n)$. The content of this manuscript is to give
an efficient presentation of the equivariant cohomology ring
$H^*_{S^1}(\Y)$. Our proof uses Hilbert series and regular sequences, in a similar spirit to previous work of Fukukawa, Ishida, and Masuda \cite{FukukawaIshidaMasuda-typeA, Fukukawa-G2} which computes the graph cohomology of the GKM graphs of the flag varieties of classical type and of $G_2$. 

This paper is organized as follows. We briefly recall the necessary
background in Section~\ref{sec2}. The main theorem, Theorem~\ref{theo:3.1}, is formulated in
Section~\ref{sec3}. Hilbert series and regular sequences are introduced in Section~\ref{sec4} to prove the main result. 
The proof of one key lemma used in the proof of the
main theorem occupies Section~\ref{sec5}.



\bigskip
\noindent
{\bf Acknowledgments.} 
We thank Satoshi Murai for helpful comments on regular sequences, which greatly improved our paper.

\section{Peterson varieties and $S^1$-fixed points}\label{sec2}

In this section we briefly recall the definitions of our main objects
of study. We also record some key facts. For details and proofs, we refer the reader to \cite{HT2}. 
Since we work exclusively in
Lie type $A$, we henceforth omit it from our terminology.

Let $n$ be a fixed positive integer which we assume throughout is
$\geq 2$. The flag variety $\fln$ is the space of nested subspaces in $\C^n$,  i.e.,
\[
\fln=\{ V_\bullet =( V_0 \subset V_1\subset V_2 \subset \cdots \subset V_n=\C^n) \mid \dim_{\C} V_i = i \}.
\]
Let $N$ be the $n \times n$ principal nilpotent operator which, 
 written with respect to the standard basis of $\C^n$, is associated
 to the matrix with a single $n \times n$ Jordan block of eigenvalue
 $0$: 
\[
       N:=\left(
          \begin{array}{ccccc}
            0 & 1& 0 &\cdots & 0 \\
            0&0&1& \cdots &0  \\
            \vdots & \vdots& \vdots &\ddots & \vdots \\
            0& 0& 0&\cdots  & 1\\
            0 & 0& 0&\cdots & 0 \\
         \end{array}
       \right).
\]
Then the \textbf{Peterson variety} $\Y$ is the subvariety of $\fln$ defined as  
\begin{equation}\label{eq:peterson}
\Y:=\{ V_\bullet \in \fln \mid NV_i \subset V_{i+1} \text{ for all } i=1, 2, \cdots , n-1 \}. 
\end{equation}
The Peterson variety is a (singular) projective variety of complex dimension $n-1$.  

The $n$-dimensional compact torus $\Tn$ consisting of diagonal
$n\times n$ unitary matrices acts on $\fln$ in a natural way.  This
torus action does not preserve $\Y$, but the following circle subgroup of $\Tn$ preserves $\Y$: 
\begin{equation} \label{eq:2.1}
\S1:= \left\{   \diag(g^n,g^{n-1},\dots,g)
\mid g\in\C,\| g\|=1 
      \right\}.
\end{equation}
The $\S1$-fixed points in $\Y$ are the $\Tn$-fixed points in $\fln$ that lie in $\Y$, i.e., 
\begin{equation}\label{eq:2.2}
\Y^{\S1} = \fln^T \cap \Y.
\end{equation}
As is standard, we identify $\fln^{\Tn}$ with the set of permutations
on $n$ letters $S_n$. More specifically, it is straightforward to see
that $V_\bullet$ is in $\fln^T$ precisely when there exists $w \in
S_n$ such that 
\begin{equation} \label{eq:2.3}
V_i= \langle e_{w(1)}, e_{w(2)}, \cdots ,e_{w(i)}\rangle \text{ for $i=1,2,\dots,n$} 
\end{equation}
where $e_k$ denotes the $k$-th element in the standard basis of $\C^n$. 
It is shown in \cite{HT2} that a permutation $w
\in S_n \cong \fln^{\Tn}$ is in $\Y^{\S1}$ precisely when the one-line notation of
$w^{-1}$ (equivalently $w$, since in this case $w=w^{-1}$) is of the form
\begin{equation} \label{eq:2.4}
w=\underbrace{j_1\ j_1-1\cdots 1}_\text{$j_1$ entries}\ \underbrace{j_2\ j_2-1\cdots j_1+1}_\text{$j_2-j_1$ entries} \cdots \underbrace{n\ n-1\cdots j_m+1}_\text{$n-j_m$ entries}, 
\end{equation}
where $1\le j_1 < j_2 < \cdots < j_m < n$ is any sequence of strictly increasing integers.

\section{A ring presentation of $H^*_S(\Y)$} \label{sec3}
 
In this section we formulate our main result, Theorem~\ref{theo:3.1}
below, which gives a ring presentation via generators and relations of
the $S$-equivariant cohomology ring of $\Y$. Our presentation is more
efficient than previous computations of this ring
(cf. Remark~\ref{remark:efficient} below). 

Consider the commutative diagram
\begin{equation} \label{eq:3.1}
\begin{CD}
\HT(\fln) @>\iota_1 >> \bigoplus_{w\in \fln^T=S_n}\HT(w) \\
@V\pi_1 VV @V\pi_2 VV \\ 
\HS(\Y) @>\iota_2 >> \bigoplus_{w \in \Y^{\S1}\subset S_n}\HS(w) \\
\end{CD}
\end{equation}
where the maps are induced from the 
inclusions $\Y\hookrightarrow \fln$, $\Y^{\S1}\hookrightarrow
\fln^{T}$ and $\S1\hookrightarrow T$.  Since $H^{odd}(\fln)$ and
$H^{odd}(\Y)$ vanish, the maps $\iota_1$ and $\iota_2$ above are both
injective.  Moreover, it is known that the map $\pi_1$ above is surjective \cite[Theorem 4.12]{HT2}.  Therefore, $\HS(\Y)$ is isomorphic to the image of $\HT(\fln)$ by $\pi_2\circ \iota_1$. 

Through the map $\diag(g^n,g^{n-1},\dots,g)\to g$, we may identify $\S1$ with the unit circle $S^1$ of $\C$ so that we have an identification 
\[
\HS(\mathrm{pt})=H^*(BS) \cong H^*(BS^1) \cong \QC[t].
\]
The torus $\Tn \subset U(n)$ of diagonal unitary matrices has a natural product decomposition $\Tn \cong (S^1)^n$.  This decomposition identifies $BT$ with $(BS^1)^n$ and induces an identification 
\[
H^*_{\Tn}(\mathrm{pt})=H^*(B\Tn) \cong \bigotimes_{i=1}^n H^*(BS^1)
\cong \QC[t_1,\dots,t_n],
\]
where $t_i$ $(i=1,2,\dots,n)$ denotes the element corresponding to the
fixed generator $t$ of $H^2(BS^1)$.  Then from the explicit
description of $S$ as the subgroup $\diag(g^n, g^{n-1},\dots,g)$ of
$\Tn$ it readily follows that  
\begin{equation} \label{eq:3.2}
\pi_2(t_i)=(n+1-i)t.
\end{equation}

We now briefly recall a well-known ring presentation of the
equivariant cohomology ring $H^*_\Tn(\fln)$. There is a tautological
filtration of the trivial rank $n$ vector bundle over $\fln$
\[
\fln \times \{0\} =U_0 \subset U_1 \subset U_2\subset \cdots \subset U_{n-1} \subset U_n=\fln\times \C^n
\]
where the fiber of $U_i$ over a point in $\fln$ corresponding to a
flag $V_\bullet$ is precisely the vector space $V_i$ of $V_\bullet$.
The bundles $U_i$ are all $\Tn$-equivariant and the quotient bundles
$U_i/U_{i-1}$ $(i=1,2,\dots,n)$ are $\Tn$-equivariant line bundles
over $\fln$.  We define 
\begin{equation} \label{eq:3.3}
\tau_i:=c_1^{\Tn}(U_i/U_{i-1}) \in H^2_{\Tn}(\fln) \quad\text{for $i=1,2,\dots,n$},
\end{equation}
where $c_1^{\Tn}$ denotes the equivariant first Chern class.  Then it
is known (see e.g. \cite[Equation (2.1)]{GHZ-GKM} or \cite{FukukawaIshidaMasuda-typeA}) that 
\begin{equation} \label{eq:3.4}
\HT(\fln) =\QC[\tau_1, \cdots ,\tau_n , t_1 , \cdots ,t_n]/I
\end{equation}
where $I$ is the ideal generated by 
\[
e_i(\tau)-e_i(t) \quad\text{for $i=1,2, \cdots ,n$} 
\]
and $e_i(\tau)$ (resp. $e_i(t)$) denotes the $i$-th elementary
symmetric polynomial in the variables $\tau_1 ,\cdots , \tau_n$
(resp. $t_1 ,\cdots ,t_n$). By slight abuse of notation, in the
discussion below we denote
by $\tau_i, t_i$ the corresponding cohomology classes in $\HT(\fln)$
(i.e. the equivalence classes of $\tau_i, t_i$ in the quotient ring
$\QC[\tau_1,\cdots,\tau_n, t_1, \cdots,t_n]/I$).

It follows from \eqref{eq:2.3} and \eqref{eq:3.3} that for $w\in S_n$
we have
\begin{equation} \label{eq:3.5}
\iota_1(\tau_i)|_w=t_{w(i)}
\end{equation}
and clearly
\begin{equation} \label{eq:3.6}
\iota_1(t_i)|_w=t_i,
\end{equation}
where $*|_w$ denotes the $w$-th component of $*$ in the direct sum
$\bigoplus_{w \in S_n} H^*_T(w)$.

We define  
\begin{equation} \label{eq:3.7}
\xi_k :=\pi_1(\sum_{i=1}^{k}(t_i-\tau_i)) \quad\text{for $k=1,\dots,n$}.
\end{equation}
The following lemma computes the images of the $\xi_k$ in $H^*_S(\Y^S)
\cong \bigoplus_{w \in \Y^S} H^*_S(w)$ under the map $\iota_2$
in~\eqref{eq:3.1}. 

\begin{lemm}\label{lemma:xik}
Let $\xi_k \in H^*_S(\Y)$ for $1 \le k \le n$ be defined as above. Then 
\[
\iota_2(\xi_k)|_{w} =\sum_{i=1}^k(w(i)-i)t. 
\]
\end{lemm}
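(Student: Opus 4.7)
The plan is a direct diagram chase using the commutativity of~\eqref{eq:3.1} together with the restriction formulas already recorded in the excerpt. Since $\iota_2\circ\pi_1=\pi_2\circ\iota_1$, the class $\iota_2(\xi_k)$ equals $\pi_2\!\left(\iota_1\!\left(\sum_{i=1}^{k}(t_i-\tau_i)\right)\right)$, so it suffices to evaluate the right-hand side componentwise at each $w\in\Y^{S}\subseteq\fln^{T}$.

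First I would apply the localization formulas~\eqref{eq:3.5} and~\eqref{eq:3.6}, which immediately give
\[
\iota_1\!\left(\sum_{i=1}^{k}(t_i-\tau_i)\right)\bigg|_{w}=\sum_{i=1}^{k}\bigl(t_i-t_{w(i)}\bigr).
\]
Next, the vertical map $\pi_2$ acts on the summand indexed by $w$ by the restriction homomorphism $H^{*}_{T}(\mathrm{pt})\to H^{*}_{S}(\mathrm{pt})$ induced by the inclusion $S\hookrightarrow T$; on generators this is precisely~\eqref{eq:3.2}, namely $t_j\mapsto(n+1-j)t$. Applying this componentwise yields
\[
\iota_2(\xi_k)|_{w}=\sum_{i=1}^{k}\bigl((n+1-i)t-(n+1-w(i))t\bigr)=\sum_{i=1}^{k}\bigl(w(i)-i\bigr)t,
\]
which is the desired formula.

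The proof is essentially routine and I do not anticipate any substantive obstacle. The only point that deserves a brief word of care is the identification of $\pi_2$ on each fixed-point summand with the restriction map $H^{*}_{T}(\mathrm{pt})\to H^{*}_{S}(\mathrm{pt})$, so that the global identity~\eqref{eq:3.2} can legitimately be applied pointwise; once this is noted, the computation collapses to the one-line calculation above.
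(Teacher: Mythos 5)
Your proof is correct and follows exactly the same route as the paper's: chase the commutative diagram~\eqref{eq:3.1}, restrict via~\eqref{eq:3.5} and~\eqref{eq:3.6}, then apply~\eqref{eq:3.2} componentwise. Your extra remark about identifying $\pi_2$ on each fixed-point summand with the restriction $H^*_T(\mathrm{pt})\to H^*_S(\mathrm{pt})$ is a reasonable point of care that the paper leaves implicit.
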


\begin{proof}
For $w \in \Y^{\S1}$ and $1 \le k \le n$ we have 
\begin{equation} \label{eq:3.8}
\begin{split}
\iota_2(\xi_k)|_{w}&=\iota_2(\pi_1(\sum_{i=1}^k(t_i-\tau_i)))|_w
\textup{ by definition of $\xi_k$} \\
&=\pi_2(\iota_1(\sum_{i=1}^k(t_i-\tau_i)))|_w \quad \textup{ by commutativity
  of~\eqref{eq:3.1}} \\
&=\pi_2(\sum_{i=1}^k(t_i-t_{w(i)})) \quad \textup{ by~\eqref{eq:3.5} and~\eqref{eq:3.6}}\\
&=\sum_{i=1}^k(w(i)-i)t \quad \textup{ by~\eqref{eq:3.2}} 
\end{split}
\end{equation}
as desired. 
\end{proof}

Since $\sum_{i=1}^ni=\sum_{i=1}^nw(i)$,
Lemma~\ref{lemma:xik} immediately implies that $\iota_2(\xi_n)|_w=0$
for any $w\in \Y^{\S1}$. In particular we may conclude 
\begin{equation} \label{eq:3.9}
\xi_n=0 
\end{equation}
since $\iota_2$ is injective.  

The next lemma derives some relations which are satisfied among the
elements $\xi_k \in H^*_S(\Y)$. By slight abuse of notation we denote also by $t$ the element in
$H^*_S(\Y)$ which is the image of $t \in H^*_S(\mathrm{pt})$ under the
canonical map $H^*_S(\mathrm{pt}) \to H^*_S(\Y)$. 
Note that $\iota_2(t) \vert_w = t$ for all $w \in \Y^S$. 

\begin{lemm}\label{lemm:3.1}
Let $\xi_k \in H^*_S(\Y)$ for $1 \le k \le n$ be defined as above. Then 
$\xi_k(\xi_k-\frac{1}{2}\xi_{k-1}-\frac{1}{2}\xi_{k+1}-t)=0$ for $k=1,2,\dots,n-1$. 
\end{lemm}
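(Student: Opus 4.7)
The plan is to prove the identity by reducing, via injectivity of $\iota_2$ in diagram~\eqref{eq:3.1}, to a pointwise verification at each $S^1$-fixed point $w \in \Y^{S^1}$. Since $\iota_2$ is a ring homomorphism and $H^*_S(w) \cong \QC[t]$ is an integral domain, it suffices to show that for every $w \in \Y^{S^1}$ and every $1 \leq k \leq n-1$, at least one of the two factors $\iota_2(\xi_k)|_w$ and $\iota_2(\xi_k - \frac{1}{2}\xi_{k-1} - \frac{1}{2}\xi_{k+1} - t)|_w$ vanishes. Here I adopt the convention $\xi_0 = 0$, and note that $\xi_n = 0$ by \eqref{eq:3.9}, so the formula is unambiguous at the boundary cases $k = 1$ and $k = n-1$.

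The first factor is given directly by Lemma~\ref{lemma:xik}: $\iota_2(\xi_k)|_w = \sum_{i=1}^k (w(i) - i) t$. For the second factor, applying Lemma~\ref{lemma:xik} again and telescoping yields
\[
\iota_2\bigl(\xi_k - \tfrac{1}{2}\xi_{k-1} - \tfrac{1}{2}\xi_{k+1} - t\bigr)\bigl|_w = \tfrac{1}{2}\bigl((w(k)-k) - (w(k+1)-(k+1))\bigr)t - t = \tfrac{1}{2}\bigl(w(k) - w(k+1) - 1\bigr)t.
\]

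The key step is then a case analysis using the explicit one-line description~\eqref{eq:2.4} of $w \in \Y^{S^1}$. Either $k$ and $k+1$ belong to the same descending block of $w$, in which case $w(k+1) = w(k) - 1$ and the second factor vanishes; or else $k = j_\ell$ for some $\ell$, i.e., $k$ lies at a boundary between two blocks. In the latter situation, the first $\ell$ blocks together contain precisely the values $\{1, 2, \ldots, j_\ell\} = \{1, 2, \ldots, k\}$, so $\{w(1), \ldots, w(k)\} = \{1, \ldots, k\}$ as sets and therefore $\sum_{i=1}^k (w(i) - i) = 0$, making the first factor vanish. In each case the product vanishes at $w$, and injectivity of $\iota_2$ gives the claimed identity in $H^*_S(\Y)$.

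The main obstacle is conceptual rather than computational: one must recognize that the combinatorial structure of the fixed points forces the desired dichotomy at every $w$. Once the one-line description~\eqref{eq:2.4} is exploited, the arithmetic (telescoping of consecutive differences, block-fills-initial-values argument) is routine.
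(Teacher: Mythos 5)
Your proposal is correct and follows essentially the same route as the paper: reduce to fixed points via injectivity of $\iota_2$, compute both factors using Lemma~\ref{lemma:xik}, and run the same dichotomy on the block structure of~\eqref{eq:2.4} (second factor vanishes inside a block, first factor vanishes at a block boundary $k=j_\ell$). You even supply a detail the paper leaves implicit, namely why $\{w(1),\dots,w(k)\}=\{1,\dots,k\}$ when $k=j_\ell$.
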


\begin{proof}
Let $w\in \Y^{\S1}\subset S_n$.  It follows from Lemma~\ref{lemma:xik} that  
\begin{equation} \label{eq:3.10}
\begin{split}
&\iota_2(\xi_k-\frac{1}{2}\xi_{k-1}-\frac{1}{2}\xi_{k+1}-t)|_w\\
=& \sum_{i=1}^k(w(i)-i)t-\frac{1}{2}\sum_{i=1}^{k-1}(w(i)-i)t-\frac{1}{2}\sum_{i=1}^{k+1}(w(i)-i)t-t\\
=& \frac{1}{2}(w(k)-w(k+1)-1)t.
\end{split}
\end{equation}
Since $w$ is in $\Y^S$, we know it must be of the form given
in~\eqref{eq:2.4}. If $k=j_q$ for some $1 \le q \le m$, then
$\sum_{i=1}^k i=\sum_{i=1}^k w(i)$. Otherwise, $w(k+1)=w(k)-1$.
Therefore, for any $w \in \Y^{\S1}$ and for any $k$, either \eqref{eq:3.8} or \eqref{eq:3.10} vanishes.  This implies the lemma because $\iota_2$ is injective. 
\end{proof}

Our main result states that the relations given in
Lemma~\ref{lemm:3.1} are enough to determine the ring structure. 

\begin{theo} \label{theo:3.1}
Let $n$ be a positive integer, $n \ge 2$. Let $\Y \subseteq \fln$ be
the Peterson variety defined in~\eqref{eq:peterson}. Let the circle
group $\S1$ act on $\Y$ as described in Section~\ref{sec2}. Then the
$\S1$-equivariant cohomology ring of $\Y$ can be presented by
generators and relations as follows: 
\[
\HS(\Y) \cong \QC[t, \xi_1,\dots,\xi_{n-1}]/J,
\]
where $J$ is the ideal generated by the quadratic polynomials
\begin{equation}\label{eq:defining relations}
\xi_k(\xi_k-\frac{1}{2}\xi_{k-1}-\frac{1}{2}\xi_{k+1}-t)\quad\text{for $k=1,2,\dots,n-1$}
\end{equation}
where we take $\xi_0=\xi_n=0$. 
\end{theo}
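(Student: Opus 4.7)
The plan is to exhibit a ring isomorphism $\phi : R/J \to \HS(\Y)$ where $R = \QC[t, \xi_1, \ldots, \xi_{n-1}]$. By Lemma \ref{lemm:3.1}, sending each indeterminate in $R$ to the class of the same name in $\HS(\Y)$ gives a well-defined ring homomorphism $\phi$, so the task is to check that $\phi$ is surjective and injective.

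Surjectivity is the easier half. Since $\pi_1 : \HT(\fln) \to \HS(\Y)$ is surjective (as recalled after \eqref{eq:3.1}) and $\HT(\fln)$ is generated as a $\QC$-algebra by $\tau_1, \ldots, \tau_n, t_1, \ldots, t_n$, it suffices to write each $\pi_1(\tau_i)$ and $\pi_1(t_i)$ as a polynomial in $t$ and the $\xi_k$. From \eqref{eq:3.2} we have $\pi_1(t_i) = (n+1-i)t$, and combining this with the definition \eqref{eq:3.7} gives $\pi_1(\tau_k) = (n+1-k)t - (\xi_k - \xi_{k-1})$, with the convention $\xi_0 := 0$. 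Hence $\phi$ is surjective.

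For injectivity, I would compare Hilbert series with the grading $\deg t = \deg \xi_k = 2$. On the target side, $\Y$ has vanishing odd cohomology, so $\HS(\Y) \cong H^*(\Y) \otimes_{\QC} \QC[t]$ as graded $\QC$-vector spaces by equivariant formality (Leray--Hirsch). The Betti numbers $\dim H^{2k}(\Y) = \binom{n-1}{k}$ are well known (they follow from the affine paving of $\Y$ indexed by the $\S1$-fixed points \eqref{eq:2.4}), so
\[
\mathrm{Hilb}(\HS(\Y), q) = \frac{(1+q^2)^{n-1}}{1-q^2}.
\]
On the source side, the $n-1$ defining polynomials $f_k := \xi_k(\xi_k - \tfrac{1}{2}\xi_{k-1} - \tfrac{1}{2}\xi_{k+1} - t)$ are homogeneous of degree $4$ in the $n$-variable ring $R$. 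If $(f_1, \ldots, f_{n-1})$ is a regular sequence in $R$, then $R/J$ is a complete intersection with Hilbert series
\[
\mathrm{Hilb}(R/J, q) = \frac{(1-q^4)^{n-1}}{(1-q^2)^n} = \frac{(1+q^2)^{n-1}}{1-q^2},
\]
matching the target. Since a graded surjection between graded vector spaces of the same finite dimension in every degree must be an isomorphism, this would finish the proof.

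The main obstacle is thus the regular sequence claim for $(f_1, \ldots, f_{n-1})$ in $R$. My plan of attack is to show that the common zero locus $V(f_1, \ldots, f_{n-1}) \subset \mathrm{Spec}(R)$ has the expected Krull dimension $1$; since $R$ is Cohen--Macaulay and the $f_k$ are homogeneous, this forces the sequence to be regular. Concretely, one may specialize $t = 1$ and verify that $\{f_1 = \cdots = f_{n-1} = 0\} \subset \QC^{n-1}$ is a finite scheme by splitting into cases according to the subset $\{k : \xi_k = 0\}$ and solving the affine linear recursion $\xi_{k+1} = 2\xi_k - \xi_{k-1} - 1$ on each block of consecutive indices where $\xi_k \ne 0$; each such block has finitely many solutions pinned down by its boundary values. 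This reduction of the problem to a finite combinatorial check is what I expect Section \ref{sec5} to carry out in detail.
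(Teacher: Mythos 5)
Your overall strategy (surjectivity from $\pi_1$, then a Hilbert series comparison reducing everything to showing that $f_1,\dots,f_{n-1}$ is a regular sequence) is exactly the paper's, and your surjectivity argument and the identity $\frac{(1-q^4)^{n-1}}{(1-q^2)^n}=\frac{(1+q^2)^{n-1}}{1-q^2}$ are fine. But the final step has a genuine gap: finiteness of the slice $\{t=1\}\cap V(f_1,\dots,f_{n-1})$ does \emph{not} bound $\dim V(f_1,\dots,f_{n-1})$ by $1$. Since the $f_k$ are homogeneous, $V(f_1,\dots,f_{n-1})$ is a cone and each irreducible component is a cone of dimension $\ge 1$; a component not contained in the hyperplane $t=0$ meets $\{t=1\}$ in a set of dimension one less, so your computation only rules out large components meeting $\{t\ne 0\}$. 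A component of dimension $\ge 2$ lying entirely inside $\{t=0\}$ is untouched by your check, and ruling it out requires showing that the common zero set of the specializations $\bxi_k(\bxi_k-\frac12\bxi_{k-1}-\frac12\bxi_{k+1})$ in $\C^{n-1}$ has dimension $\le 1$. That statement is precisely the paper's key Lemma~\ref{lemm:4.1} (proved in Section~\ref{sec5}, where it is shown the $t=0$ locus is just the origin), so you have verified the wrong, and in fact the dispensable, slice: the paper observes that once the $t=0$ locus is $\{0\}$, the full sequence $f_1,\dots,f_{n-1},t$ is regular by Proposition~\ref{prop:5.1}, and regularity of the initial subsequence $f_1,\dots,f_{n-1}$ follows for free, with no need to look at $t=1$ at all. (Minor point: your recursion should read $\xi_{k+1}=2\xi_k-\xi_{k-1}-2t$, so the constant at $t=1$ is $-2$, not $-1$.)

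The good news is that your block-recursion idea closes the gap if you apply it at $t=0$: on a maximal run $a,\dots,b$ of indices with $\xi_k\ne 0$ one gets the homogeneous recurrence $\xi_{k+1}=2\xi_k-\xi_{k-1}$ with boundary values $\xi_{a-1}=\xi_{b+1}=0$, whose solutions are linear in $k$ and hence identically zero, a contradiction; so the $t=0$ locus is only the origin. This is arguably more elementary than the paper's continued-fraction induction (Lemmas~\ref{lemm:5.1} and~\ref{lemm:5.2}), though the paper's version proves a more general statement with arbitrary coefficients $a_i,b_i$. As written, however, your proposal omits the one computation the proof actually hinges on.
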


Since $H^{odd}(\Y)=0$ and $\HS(\Y)=H^*(B\S1)\otimes H^*(\Y)$ as
$H^*(B\S1)$-modules, we also obtain the following corollary. 

\begin{coro} \label{coro:3.1}
Let $\bxi_k$ be the restriction of $\xi_k$ to $H^*(\Y)$.  Then 
\[
H^*(\Y)=\QC[\bxi_1,\dots,\bxi_{n-1}]/\check{J},
\]
where $\check{J}$ is the ideal generated by 
\[
\bxi_k(\bxi_k-\frac{1}{2}\bxi_{k-1}-\frac{1}{2}\bxi_{k+1})\quad\text{for $k=1,2,\dots,n-1$}
\]
with $\bxi_0=\bxi_n=0$. 
\end{coro}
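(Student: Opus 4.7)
The plan is to deduce the corollary from Theorem~\ref{theo:3.1} by specializing $t=0$. The two ingredients stated immediately before the corollary, namely $H^{odd}(\Y)=0$ and the $H^*(B\S1)$-module decomposition $\HS(\Y) \cong H^*(B\S1) \otimes H^*(\Y)$, are the key inputs; granted these, the deduction is purely formal.

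First, I would upgrade the module decomposition to a ring-level isomorphism $H^*(\Y) \cong \HS(\Y)/(t)$, where $(t)$ denotes the ideal of $\HS(\Y)$ generated by the image of $t \in H^*(B\S1)$. This is the standard Leray--Hirsch conclusion applied to the Borel fibration $\Y \hookrightarrow E\S1 \times_{\S1} \Y \to B\S1$: since $\HS(\Y)$ is free as an $H^*(B\S1)$-module, the canonical ring homomorphism $\HS(\Y) \to H^*(\Y)$ induced by restriction to a fiber is surjective with kernel precisely $(t)\HS(\Y)$.

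Next, I would apply this to the explicit presentation from Theorem~\ref{theo:3.1}. Reducing
\[
\HS(\Y) \cong \QC[t, \xi_1, \ldots, \xi_{n-1}] / J
\]
modulo $t$ eliminates the variable $t$ and sends each defining relation $\xi_k\bigl(\xi_k - \tfrac12 \xi_{k-1} - \tfrac12 \xi_{k+1} - t\bigr)$ of $J$ to $\xi_k\bigl(\xi_k - \tfrac12 \xi_{k-1} - \tfrac12 \xi_{k+1}\bigr)$. Identifying the image of $\xi_k$ in the quotient with $\bxi_k$ (which is consistent with the stated definition $\bxi_k := \xi_k|_{H^*(\Y)}$ under the isomorphism of the previous paragraph) produces exactly the presentation
\[
H^*(\Y) \cong \QC[\bxi_1, \ldots, \bxi_{n-1}] / \check{J},
\]
with the convention $\bxi_0 = \bxi_n = 0$.

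I do not anticipate any substantive obstacle: once Theorem~\ref{theo:3.1} and the vanishing $H^{odd}(\Y)=0$ are in hand, the corollary is routine. The vanishing $H^{odd}(\Y)=0$ itself is a classical fact about type $A$ Peterson varieties (following, for example, from the existence of an affine paving), and is explicitly invoked in the paragraph preceding the corollary statement.
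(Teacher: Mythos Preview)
Your proposal is correct and matches the paper's approach exactly: the paper does not give a separate proof of the corollary, but indicates in the sentence immediately preceding it that it follows from Theorem~\ref{theo:3.1} via the facts $H^{odd}(\Y)=0$ and $\HS(\Y)\cong H^*(B\S1)\otimes H^*(\Y)$, which is precisely the ``specialize $t=0$'' argument you spell out.
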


\begin{rema}\label{remark:efficient}
In \cite{BH} it is also shown that the equivariant cohomology ring $\HS(\Y)$ is a quotient of a polynomial ring $\C[t, \xi_1, \ldots, \xi_{n-1}]$ (using slightly different notation) by an 
ideal generated by polynomials denoted as $q_{i,\mathcal{A}}$ \cite[Theorem 3.8]{BH}. The ring presentation in \cite{BH} is a simplification of the presentation given in \cite[Theorem 6.12 and Corollary 6.14]{HT2} by decreasing both the number of variables in the polynomial ring and the number of generators of the ideal of relations. In fact, it was conjectured in \cite[Remark 3.12]{BH} that things could be made even simpler, namely, that the ideal of relations for the presentation in \cite[Theorem 3.8]{BH} is in fact generated by just the quadratics. Our Theorem~\ref{theo:3.1} proves that this is in fact the case. Indeed, it is straightforward to see from the definitions in \cite{BH, HT2} that the polynomials~\eqref{eq:defining relations} in Theorem~\ref{theo:3.1} correspond to the $q_{i,\mathcal{A}}$ for the special case $\mathcal{A} = \{i\}$. These are precisely the quadratic polynomials among all the $q_{i,\mathcal{A}}$. 
\end{rema}

 \section{Hilbert series and regular sequences}\label{sec4}

In this section we prove our main result, Theorem~\ref{theo:3.1}, modulo one key lemma whose proof we postpone to Section~\ref{sec5}. 

Since the map $\pi_1$ in the diagram~\eqref{eq:3.1} is known to be surjective, it follows from \eqref{eq:3.4}, \eqref{eq:3.7}, \eqref{eq:3.9} and Lemma~\ref{lemm:3.1} that the natural homomorphism of graded rings  
\begin{equation} \label{eq:4.1}
\varphi\colon \QC[t,\xi_1,\dots,\xi_{n-1}]/J\twoheadrightarrow \HS(\Y)
\end{equation} 
is surjective.  Forgetting the $S$-action, this induces a surjective homomorphism of graded rings 
\begin{equation} \label{eq:4.2}
\check{\varphi}\colon \QC[\bxi_1,\dots,\bxi_{n-1}]/\check{J}\twoheadrightarrow H^*(\Y).
\end{equation} 

We next recall the definition of Hilbert series. Suppose 
$A^*=\bigoplus_{i=0}^\infty A^i$ is a graded module over $\QC$. Then 
its associated Hilbert series $F(A^*,s)$ is defined to be the
formal power series
\[
F(A^*,s):=\sum_{i=0}^\infty (\dim_\QC A^{i})s^i.
\]
When comparing Hilbert series of different rings, we use the notation $\sum a_i s^i \geq \sum b_i s^i$ to mean that $a_i  \geq b_i$ for all $i$. 

In our setting, taking the Hilbert series of both rings appearing in \eqref{eq:4.1} and \eqref{eq:4.2} yields 
\begin{eqnarray} 
F(\QC[t,\xi_1,\dots,\xi_{n-1}]/J,s)&\ge F(\HS(\Y),s) \label{eq:4.3}\\
F(\QC[\bxi_1,\dots,\bxi_{n-1}]/\check{J},s)&\ge F(H^*(\Y),s) \label{eq:4.4}
\end{eqnarray}
since both $\varphi$ and $\check{\varphi}$ are surjective. 
Note that $\varphi$ (resp. $\check{\varphi}$) is an isomorphism if and only if the inequality in~\eqref{eq:4.3} (resp. ~\eqref{eq:4.4}) is in fact an equality. 

The Hilbert series of the right hand sides of~\eqref{eq:4.3} and~\eqref{eq:4.4} are known to be as follows. 
It is shown in \cite{ST} that 
\begin{equation} \label{eq:4.5}
F(H^*(\Y),s)=(1+s^2)^{n-1}.
\end{equation}
Moreover, since $\HS(\Y)=H^*(B\S1)\otimes H^*(\Y)$ as $H^*(B\S1)$-modules, \eqref{eq:4.5} implies 
\begin{equation} \label{eq:4.5-1}
F(\HS(\Y),s)=\frac{(1+s^2)^{n-1}}{1-s^2}.
\end{equation}  

The following lemma computes the left hand side of~\eqref{eq:4.4}. Its proof will be given in Section~\ref{sec5} in a more general setting. 

\begin{lemm} \label{lemm:4.1}
$F(\QC[\bxi_1,\dots,\bxi_{n-1}]/\check{J},s)=(1+s^2)^{n-1}$.  
\end{lemm}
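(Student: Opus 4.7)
The plan is to prove Lemma~\ref{lemm:4.1} by showing that the $n-1$ defining generators of $\check{J}$ form a homogeneous regular sequence in the polynomial ring $R := \QC[\bxi_1, \dots, \bxi_{n-1}]$ with the grading $\deg \bxi_i = 2$, and then invoking the standard Hilbert series formula for a quotient by a regular sequence. The key numerical coincidence that makes this approach plausible is that we have exactly $n-1$ degree-$4$ generators in a polynomial ring of Krull dimension $n-1$.

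First, I would record the following standard principle: since $R$ is a Cohen--Macaulay graded ring of Krull dimension $n-1$, a homogeneous sequence in $R$ of length $n-1$ is a regular sequence if and only if it is a system of parameters, equivalently if and only if the quotient is Artinian, equivalently if and only if the affine vanishing locus $V(\check{J}) \subseteq \QC^{n-1}$ consists only of the origin. This reduces the lemma to a zero-locus computation.

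Next, the main task is to show $V(\check{J}) = \{0\}$. Fix a point $(\bxi_1, \dots, \bxi_{n-1}) \in V(\check{J})$ and let $S := \{k : \bxi_k \neq 0\} \subseteq \{1, \dots, n-1\}$. For each $k \in S$, the vanishing of the $k$-th defining relation forces the discrete Laplace equation
\[
\bxi_{k+1} - 2\bxi_k + \bxi_{k-1} = 0,
\]
under the convention $\bxi_0 = \bxi_n = 0$. Assuming $S$ is nonempty, decompose $S$ into maximal runs of consecutive integers and consider one such run $\{a, a+1, \dots, b\}$. By maximality of the run, $\bxi_{a-1} = 0$ and $\bxi_{b+1} = 0$ (either because these indices lie outside $S$ or because they coincide with the boundary indices $0$ or $n$). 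The discrete Dirichlet problem given by the second-difference equation on $\{a, \dots, b\}$ with zero boundary values is governed by a non-singular tridiagonal matrix and admits only the trivial solution, contradicting $\bxi_k \neq 0$ throughout the run. Hence $S = \emptyset$, and $V(\check{J}) = \{0\}$ as needed. Finally, the standard Hilbert-series formula for a quotient of a graded polynomial ring by a homogeneous regular sequence yields
\[
F\!\left(R/\check{J}, s\right) = F(R, s) \cdot \prod_{k=1}^{n-1}(1 - s^4) = \frac{(1-s^4)^{n-1}}{(1-s^2)^{n-1}} = (1+s^2)^{n-1}.
\]

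The main obstacle is the vanishing-locus step. Conceptually it is a discrete Dirichlet principle, but because the coordinates take complex (rather than real) values one cannot directly appeal to a maximum-principle argument; instead one must rely on invertibility of the relevant tridiagonal matrix, which can be verified by induction on the run length or by noting its eigenvalues are $-4\sin^2\!\bigl(j\pi/(2(b-a+2))\bigr)$ for $j=1,\dots,b-a+1$, all of which are nonzero. Once that is in place, the rest of the argument is entirely formal.
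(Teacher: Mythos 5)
Your proposal is correct, and the first half (reducing the Hilbert series claim to showing that the affine vanishing locus of the $n-1$ quadratic-in-the-variables, degree-$4$ generators is $\{0\}$, via the Cohen--Macaulay/system-of-parameters criterion) is exactly the paper's Proposition~\ref{prop:5.1}. Where you genuinely diverge is in the proof that $V(\check{J})=\{0\}$. The paper proves a more general statement (Lemma~\ref{lemm:5.1}) for equations $z_i^2=z_i(a_{i-1}z_{i-1}+b_iz_{i+1})$ with arbitrary coefficients, by induction on the number of variables: it peels off the last variable via the dichotomy $z_{\q}=0$ or $z_{\q}=a_{\q-1}z_{\q-1}$, substitutes back to modify the last coefficient, and tracks a continued-fraction nonvanishing condition on the $c_i=a_ib_i$, which is then verified for $c_i=1/4$ by a separate recurrence argument (Lemma~\ref{lemm:5.2}). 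You instead specialize immediately to the actual coefficients, pass to the support $S=\{k:\bxi_k\neq 0\}$, split it into maximal runs, and observe that on each run the relations force the discrete Dirichlet problem $\bxi_{k-1}-2\bxi_k+\bxi_{k+1}=0$ with zero boundary values, whose tridiagonal matrix is invertible (determinant $(-1)^m(m+1)$, or your eigenvalue computation); this forces the run to vanish, a contradiction. Both arguments are complete and correct. Yours is shorter and more transparent for the case at hand and avoids the continued-fraction bookkeeping entirely; the paper's buys a more general criterion applicable to non-constant coefficients $a_i,b_i$, at the cost of the two auxiliary lemmas. (A minor simplification of your own step: on a run the recursion $\bxi_{k+1}=2\bxi_k-\bxi_{k-1}$ with $\bxi_{a-1}=0$ gives $\bxi_{a-1+j}=j\,\bxi_a$, so $\bxi_{b+1}=(b-a+2)\bxi_a=0$ forces $\bxi_a=0$ directly, with no need for eigenvalues.)
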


Assuming Lemma~\ref{lemm:4.1}, we now complete the proof of Theorem~\ref{theo:3.1}. For this we use the following notion from commutative algebra (see e.g. \cite{stan96}). 

\begin{defi}\label{def:regular}
Let $R$ be a graded commutative algebra over $\QC$ and let $R_+$
denote the positive-degree elements in $R$.  Then a homogeneous
sequence $\f_1,\dots,\f_r \in R_+$ is a \emph{regular sequence} if 
$\f_k$ is a non-zero-divisor in the quotient ring $R/(\f_1,\dots,\f_{k-1})$ for every $1\le k\le r$.  This is equivalent to saying that $\f_1,\dots,\f_r$ is algebraically independent over $\QC$ and $R$ is a free $\QC[\f_1,\dots,\f_r]$-module.  
\end{defi}
It is a well-known fact (see for instance \cite[p.35]{stan96}) that 
a homogeneous sequence $\f_1,\dots,\f_r \in R_+$ 
is a regular sequence if and only if 
\begin{equation} \label{eq:4.6}
F(R/(\f_1,\dots,\f_r),s)=F(R,s)\prod_{k=1}^r(1-s^{\deg{\f_k}}). 
\end{equation}
A sketch of the proof of this fact is as follows.  Let $\f_1,\dots,\f_r$ be a homogeneous sequence of $R$ and set $R_k:=R/(\f_1,\dots,\f_k)$ for $1\le k\le r$.  Consider the exact sequence 
\begin{equation*} \label{eq:4.6-a}
R_{k-1}\stackrel{\times \f_k}\longrightarrow R_{k-1}\to R_k\to 0 \quad \text{for $1\le k\le r$},
\end{equation*}
where $\times \f_k$ denotes multiplication by $\f_k$,  the map $R_{k-1}\to R_k$ is the quotient map and $R_0:=R$.  The regularity of the sequence $\f_1,\dots,\f_r$ implies that the map $\times\f_k$ is injective for every $1\le k\le r$, which in turn implies 
\begin{equation*} \label{eq:4.6-b}
F(R_k,s)=F(R_{k-1},s)(1-s^{\deg\f_k}) \quad\text{for any $1\le k\le r$}.
\end{equation*} 
The desired fact then follows.

Returning to our setting, we have the following lemma. 

\begin{lemm} \label{lemm:4.1-1}
In the polynomial ring $\QC[t,\xi_1,\dots,\xi_{n-1}]$, the sequence 
\[
\begin{split}
\f_k:&=\xi_k(\xi_k-\frac{1}{2}\xi_{k-1}-\frac{1}{2}\xi_{k+1}-t) \quad \text{for $1\le k\le n-1$},\\
\f_n:&=t.
\end{split}
\]
is regular. 
\end{lemm}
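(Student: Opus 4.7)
The plan is to apply the standard criterion that in the polynomial ring $R=\QC[t,\xi_1,\dots,\xi_{n-1}]$, which is Cohen-Macaulay of Krull dimension $n$, a homogeneous sequence of positive-degree elements of length $n$ is regular if and only if it is a homogeneous system of parameters, i.e., if and only if its common vanishing locus in $\mathbb{A}^n_{\QC}$ reduces to the origin. Hence it suffices to show that the simultaneous vanishing of $\f_1,\dots,\f_{n-1}$ together with $\f_n=t$ in $\QC^n$ forces every coordinate to be zero.

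Since $\f_n=t$, imposing $\f_n=0$ just sets $t=0$, and the remaining equations reduce to $\xi_k(\xi_k-\tfrac{1}{2}\xi_{k-1}-\tfrac{1}{2}\xi_{k+1})=0$ for $k=1,\dots,n-1$ (with the convention $\xi_0=\xi_n=0$). Let $S:=\{k\in\{1,\dots,n-1\}\,:\,\xi_k\neq 0\}$; for each $k\in S$, dividing the $k$-th equation by $\xi_k$ yields the linear relation $\xi_{k-1}-2\xi_k+\xi_{k+1}=0$, i.e., the discrete second difference of $(\xi_k)$ vanishes at $k$. Assume for contradiction $S\neq\emptyset$ and pick a maximal contiguous subinterval $[a,b]\subseteq S$; then by maximality (and the boundary convention when $a=1$ or $b=n-1$) we have $\xi_{a-1}=\xi_{b+1}=0$. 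The vanishing of the second difference on $[a,b]$ forces $k\mapsto\xi_k$ to be affine-linear on the integer interval $\{a-1,a,\dots,b+1\}$, and an affine-linear function vanishing at both endpoints of an interval vanishes identically, contradicting $\xi_k\neq 0$ for $k\in[a,b]$. Hence $S=\emptyset$ and every $\xi_k$ is zero.

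The only nontrivial ingredient is the elementary combinatorial observation above concerning second differences on a maximal interval of nonzero values; the passage from ``homogeneous system of parameters'' to ``regular sequence'' is standard in a Cohen-Macaulay graded ring, so I expect the main technical step to be precisely the zero-locus verification just described.
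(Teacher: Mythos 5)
Your proof is correct, but it takes a genuinely different route from the paper's. The paper proves this lemma by a Hilbert series computation: it first establishes (Lemma~\ref{lemm:4.1}) that $F(\QC[\bxi_1,\dots,\bxi_{n-1}]/\check{J},s)=(1+s^2)^{n-1}$, observes that killing $\f_n=t$ identifies $\QC[t,\xi_1,\dots,\xi_{n-1}]/(\f_1,\dots,\f_n)$ with $\QC[\bxi_1,\dots,\bxi_{n-1}]/\check{J}$, and then checks that the resulting Hilbert series matches the product formula \eqref{eq:4.6}, which characterizes regularity. The zero-locus criterion (your ``h.s.o.p.\ in a Cohen--Macaulay ring'' argument) does appear in the paper, but only later, as Proposition~\ref{prop:5.1} in the proof of Lemma~\ref{lemm:4.1}; there the resulting system of quadratic equations is handled by an induction on the number of variables through a general continued-fraction non-vanishing condition (Lemmas~\ref{lemm:5.1} and~\ref{lemm:5.2}) valid for arbitrary coefficients $a_i,b_i$. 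You instead apply the zero-locus criterion directly to the full $n$-element sequence in the $n$ variables $t,\xi_1,\dots,\xi_{n-1}$, set $t=0$, and dispose of the system $\xi_k(\xi_k-\tfrac12\xi_{k-1}-\tfrac12\xi_{k+1})=0$ by the second-difference argument: on a maximal block of nonzero coordinates the sequence is an arithmetic progression vanishing at both ends, hence zero. That argument is sound (the endpoints $\xi_{a-1},\xi_{b+1}$ vanish either by maximality or by the convention $\xi_0=\xi_n=0$, and an arithmetic progression over $\C$ vanishing at two distinct integers is identically zero), and it is shorter and more elementary than the paper's continued-fraction induction; what it gives up is the generality of Lemma~\ref{lemm:5.1}, which treats a whole family of such systems, whereas your argument is tied to the specific coefficients $\tfrac12,\tfrac12$ that make the linear relation a discrete Laplacian. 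Note also that your route would yield Lemma~\ref{lemm:4.1} as a corollary rather than using it as an input.
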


\begin{proof}  
Since $\f_n=t,$ 
from the definitions of $\f_k$ and the ideals $J$ and $\check{J}$ given in the statements of Theorem~\ref{theo:3.1} and Corollary~\ref{coro:3.1} it follows that 
\[
\begin{split}
&F(\QC[t,\xi_1,\dots,\xi_{n-1}]/(\f_1,\dots,\f_{n-1},\f_n),s)\\
=&F(\QC[\bxi_1,\dots,\bxi_{n-1}]/\check{J},s)\\
=&(1+s^2)^{n-1}
\end{split}
\]
where the last equality follows from Lemma~\ref{lemm:4.1}. This implies that \eqref{eq:4.6} is satisfied in our setting because $\deg \f_i=4$ for $1\le i\le n-1$, $\deg \f_n=2$ and 
\begin{equation} \label{eq:4.6-2}
F(\QC[t,\xi_1,\dots,\xi_{n-1}],s)=\frac{1}{(1-s^2)^n}. 
\end{equation} 
The result follows. 
\end{proof}

We can now prove the main theorem. 

\begin{proof}[Proof of Theorem~\ref{theo:3.1}]
From the definition of a regular sequence it is clear that the subsequence $\f_1,\dots,\f_{n-1}$ of a regular sequence $\f_1,\dots,\f_n$ is again a regular sequence. Hence it follows from \eqref{eq:4.6} and \eqref{eq:4.6-2} that 
\[
\begin{split}
F(\QC[t,\xi_1,\dots,\xi_{n-1}]/J,s)&=F(\QC[t,\xi_1,\dots,\xi_{n-1}]/(\f_1,\dots,\f_{n-1}),s)\\
&=\frac{1}{(1-s^2)^n}\prod_{k=1}^{n-1}(1-s^{\deg \f_k})\\
&=\frac{(1+s^2)^{n-1}}{1-s^2}.
\end{split}
\]
This together with \eqref{eq:4.5-1} shows that the equality holds in \eqref{eq:4.3}. Hence the map $\varphi$ in \eqref{eq:4.1} is an isomorphism, as desired. 
\end{proof}

\section{Proof of Lemma~\ref{lemm:4.1}}\label{sec5}

This section is devoted to the proof of Lemma~\ref{lemm:4.1}.  
Note first that Lemma~\ref{lemm:4.1} is equivalent to the statement that the sequence of homogeneous elements 
\[
\bxi_k(\bxi_k-\frac{1}{2}\bxi_{k-1}-\frac{1}{2}\bxi_{k+1})\quad\text{$(k=1,2,\dots,n-1)$},
\]
(where $\bxi_0=\bxi_n$ are both defined to be $0$) is a regular sequence in the polynomial ring $\QC[\bxi_1,\dots,\bxi_{n-1}]$.  
We now recall a criterion which characterizes when such a homogenous sequence in a polynomial ring is regular. We learned this criterion from S. Murai. 

\begin{prop} \label{prop:5.1} 
A sequence of positive-degree
  homogeneous elements $\f_1,\dots,\f_{\q}$ in the polynomial ring
  $\C[z_1,\dots,z_{\q}]$ is a regular sequence if and only if the
  solution set in $\C^{\q}$ of the equations $\f_1=0,\dots,\f_{\q}=0$
  consists only of the origin $\{0\}$. 
\end{prop}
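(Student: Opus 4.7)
The plan is to combine Hilbert's Nullstellensatz with the Cohen-Macaulay property of the polynomial ring $R=\C[z_1,\dots,z_{\q}]$. The forward direction uses regular-sequence/dimension theory to force the quotient to be finite-dimensional, which then pins down the common zero locus; the converse turns the geometric hypothesis into the algebraic statement that a system of parameters in a Cohen-Macaulay ring is automatically regular.

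For the forward direction, I would suppose $\f_1,\dots,\f_{\q}$ is a regular sequence in $R$. Each element of a regular sequence is a non-zero-divisor on the successive quotient and lies in the graded maximal ideal, so each such element drops Krull dimension by exactly one; hence $\dim R/(\f_1,\dots,\f_{\q}) = \q-\q = 0$. Being a finitely generated $\C$-algebra of Krull dimension zero, $R/(\f_1,\dots,\f_{\q})$ is a finite-dimensional $\C$-vector space, so the variety $V(\f_1,\dots,\f_{\q})\subseteq \C^{\q}$ is a finite set. Homogeneity of the $\f_i$ makes $V(\f_1,\dots,\f_{\q})$ a cone, stable under $z\mapsto \lambda z$ for every $\lambda\in\C$, so any point other than the origin would sweep out an entire line in $V$; hence $V(\f_1,\dots,\f_{\q})=\{0\}$.

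For the converse, I would suppose $V(\f_1,\dots,\f_{\q})=\{0\}$. By Hilbert's Nullstellensatz, $\sqrt{(\f_1,\dots,\f_{\q})}=(z_1,\dots,z_{\q})$, so each generator $z_i$ is nilpotent modulo $(\f_1,\dots,\f_{\q})$, whence $R/(\f_1,\dots,\f_{\q})$ is Artinian of Krull dimension zero. This says exactly that the homogeneous sequence $\f_1,\dots,\f_{\q}$ is a system of parameters for $R$. Because $R=\C[z_1,\dots,z_{\q}]$ is Cohen-Macaulay, every system of parameters in $R$ is a regular sequence (see e.g. \cite{stan96}), completing the proof.

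The main obstacle is conceptual rather than computational: the converse ultimately rests on the nontrivial fact that every system of parameters in a Cohen-Macaulay ring is a regular sequence. If one prefers to avoid invoking this as a black box, an alternative is to induct on $\q$, using the Hilbert-series criterion \eqref{eq:4.6} to translate the hypothesis $V(\f_1,\dots,\f_{\q})=\{0\}$ into the required non-zero-divisor property at each stage; the base case is trivial and the inductive step reduces to showing that a positive-degree homogeneous element whose hypersurface meets the existing zero locus only at the origin cannot annihilate any nonzero class in the preceding quotient.
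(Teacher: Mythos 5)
Your proposal is correct and follows essentially the same route as the paper: both directions pass through the equivalence with Krull dimension zero of the quotient, use the Nullstellensatz together with the cone property of homogeneous ideals to identify the zero locus with the origin, and invoke the Cohen--Macaulay property of the polynomial ring to upgrade a homogeneous system of parameters to a regular sequence. The only cosmetic difference is in the forward direction, where you argue via the dimension-dropping property of non-zero-divisors while the paper argues via algebraic independence of the $\f_i$; both are standard and equivalent here.
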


\begin{proof}
First we claim that the homogeneous sequence $\f_1,\dots,\f_{\q}$ is regular
if and only if the Krull dimension of
$\C[z_1,\dots,z_{\q}]/(\f_1,\dots,\f_{\q})$ is zero. To see this,
observe that by definition, if $\f_1, \dots,
\f_{\q}$ is a regular sequence then the $\f_1, \dots, \f_{\q}$ are
algebraically independent. This implies that the Krull dimension of
$\C[z_1, \dots, z_{\q}]/(\f_1, \dots, \f_{\q})$ is zero (note that
the number of generators of the polynomial ring
$\C[z_1,\dots,z_{\q}]$ is equal to the length of the regular
sequence). In the other direction, if $\C[z_1,\dots, z_{\q}]/(\f_1,
\dots, \f_{\q})$ has Krull dimension $0$, then the $\f_1, \dots,
\f_{\q}$ are a homogeneous system of parameters for
$\C[z_1,\dots,z_{\q}]$ \cite[Definition 5.1]{stan96}. Moreover, since
the polynomial ring 
$\C[z_1,\dots,z_{\q}]$ is Cohen-Macaulay, by \cite[Theorem
5.9]{stan96} we may conclude that the homogeneous system of parameters
$\f_1, \dots, \f_{\q}$ is a regular sequence. 

Next we observe that by Hilbert's Nullstellensatz the quotient ring
\[
\C[z_1,\dots,z_{\q}]/(\f_1,\dots,\f_{\q})
\]
has Krull dimension $0$ if
and only if the algebraic set in $\C^{\q}$ defined by the equations
$\f_1=0,\dots,\f_{\q}=0$ is zero-dimensional. Since the polynomials
$\f_1,\dots,\f_{\q}$ are assumed to be homogeneous, the corresponding
zero-dimensional algebraic set in $\C^{\q}$ must consist of only the
origin. This proves the proposition. 
\end{proof}

By Proposition~\ref{prop:5.1}, in order to prove Lemma~\ref{lemm:4.1} it suffices to check that the solution set in $\C^{\q}$ of the equations 
\begin{equation} \label{eq:5.0}
z_i^2=\frac{1}{2}z_i(z_{i-1}+z_{i+1}) \quad\text{$(i=1,2,\dots,\q)$}
\end{equation}
(where $z_0=z_{\q+1}=0$) consists of only the origin.  To prove this, we consider a more general set of 
equations in $\QC^{\q}$ ($\q\ge 2$), namely:   
\begin{equation} \label{eq:5.1}
\begin{split}
z_1^2&=b_1z_1z_2\\
z_i^2 &=z_i(a_{i-1}z_{i-1} +b_iz_{i+1}) \quad (i = 2,\cdots, \q-1)\\
z_{\q}^2&=a_{\q-1}z_{\q-1}z_{\q}
\end{split}
\end{equation}
where $a_i,b_i$ for $i=1,2,\dots,\q-1$ are fixed complex numbers. 

\begin{lemm} \label{lemm:5.1}
In the setting above, set $c_i :=a_ib_i$ for $i=1,2,\dots,\q-1$.  If 
\begin{equation} \label{eq:5.2}
 1-\cfrac{c_i}{1-\cfrac{c_{i+1}}{\qquad \cfrac{\ddots}{1-\cfrac{c_{j-1}}{1-c_j}}}}\not=0
\end{equation}
for all $1\le i\le j\le \q-1$, 
then the solution set of the equations \eqref{eq:5.1} consists of only the origin in $\QC^{\q}$. 
\end{lemm}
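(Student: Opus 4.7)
The plan is to argue by contradiction: suppose $(z_1, \dots, z_r) \in \C^r$ is a nonzero solution to~\eqref{eq:5.1}, and derive a contradiction from the hypothesis~\eqref{eq:5.2}. Rewrite each equation of~\eqref{eq:5.1} in the factored form $z_i(z_i - a_{i-1}z_{i-1} - b_iz_{i+1}) = 0$, adopting the boundary conventions $z_0 = z_{r+1} = 0$, $a_0 = 0$, $b_r = 0$; then whenever $z_i \neq 0$ we have the linear relation $z_i = a_{i-1}z_{i-1} + b_i z_{i+1}$. Let $m := \max\{i : z_i \neq 0\}$. The equation at $i = m$ reads $z_m = a_{m-1}z_{m-1}$, and this simultaneously forces $m \geq 2$, $z_{m-1} \neq 0$, and $a_{m-1} \neq 0$.

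The key idea is to propagate backward from $z_m$ and recognize the resulting ratios \emph{exactly} as the continued fractions appearing in~\eqref{eq:5.2}. Introduce $\rho_k := z_{m-k+1}/z_{m-k}$ and $\widetilde{Y}_k := a_{m-k}/\rho_k$, so that $\widetilde{Y}_1 = 1$. Dividing the equation at $i = m-k+1$ by $z_{m-k+1}$ yields the continued-fraction recursion
\[
\widetilde{Y}_k \;=\; 1 - \frac{c_{m-k+1}}{\widetilde{Y}_{k-1}},
\]
from which a straightforward induction gives the identification $\widetilde{Y}_k = P_{m-k+1,\,m-1}$, where $P_{i,j}$ denotes the continued fraction in~\eqref{eq:5.2} (with the empty convention $P_{i,i-1} = 1$). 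A parallel rearrangement of the equation at $i = m-k$ produces the descent identity
\[
a_{m-k-1}\, z_{m-k-1} \;=\; z_{m-k}\, \widetilde{Y}_{k+1}.
\]
For $0 \leq k \leq m-2$ both factors on the right are nonzero (by the inductive hypothesis and by~\eqref{eq:5.2}), so $z_{m-k-1} \neq 0$ and $a_{m-k-1} \neq 0$; in particular all of $z_1, \dots, z_m$ are nonzero.

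The contradiction then comes from applying the descent identity at the extremal index $k = m-1$, where $a_{m-k-1} = a_0 = 0$: this gives $0 = z_1\,\widetilde{Y}_m$, and since $z_1 \neq 0$ it forces $\widetilde{Y}_m = 0$. But the identification above reads $\widetilde{Y}_m = P_{1,\,m-1}$, which~\eqref{eq:5.2} asserts is nonzero --- contradiction. The main obstacle, in my view, is choosing the direction of propagation so that the arising continued fractions match~\eqref{eq:5.2} \emph{on the nose}: sweeping leftward from $m = \max\{i : z_i \neq 0\}$ reproduces the hypothesis continued fractions directly, whereas sweeping rightward from $\min\{i : z_i \neq 0\}$ would produce their reversals and would require an auxiliary numerator-invariance-under-reversal lemma.
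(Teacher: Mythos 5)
Your proof is correct, and it organizes the argument genuinely differently from the paper. The paper proves Lemma~\ref{lemm:5.1} by induction on the number of variables $r$: the last equation forces $z_r=0$ or $z_r=a_{r-1}z_{r-1}$; the first case reduces to the same system in $r-1$ variables, while the second reduces to a modified system whose last coefficient product becomes $c'_{r-2}=c_{r-2}/(1-c_{r-1})$, after which one must verify that the continued-fraction hypotheses \eqref{eq:5.4} for the modified system follow from \eqref{eq:5.2} for the original one. You instead fix a putative nonzero solution, take the top nonzero index $m$, and sweep leftward, identifying the ratios $a_{m-k}z_{m-k}/z_{m-k+1}$ on the nose with the continued fractions $P_{m-k+1,\,m-1}$ of \eqref{eq:5.2}; the hypothesis then keeps every $z_i$ with $i\le m$ nonzero until the boundary convention $a_0=0$ yields the contradiction $P_{1,\,m-1}=0$. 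The underlying elimination is the same in both arguments, but your version dispenses with the auxiliary modified system and the \eqref{eq:5.4}-from-\eqref{eq:5.2} verification, at the cost of a slightly more delicate interleaved induction: at step $k$ one must simultaneously maintain that $z_{m-k}\ne 0$, that $\rho_k$ is defined, and that $\widetilde{Y}_k$ equals a \emph{nonzero} continued fraction before invoking the descent identity (your write-up does track this). A side benefit of your route is that it exposes that, for a given solution, only the instances of \eqref{eq:5.2} with $j=m-1$ are actually needed.
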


\begin{proof}
We prove the lemma by induction on $\q$, the number of variables.  It is easy to check the lemma directly for the base case $\q=2$.  Now suppose that $\q\ge 3$ and the result of the lemma holds for $\q-1$. 
Note that the equations in \eqref{eq:5.1} which involve the variable $z_{\q}$ are the two equations 
\begin{equation*}
\begin{split}
z_{\q-1}^2 &=z_{\q-1}(a_{\q-2}z_{\q-2}+b_{\q-1}z_{\q}) \\ 
z_{\q}^2 & =a_{\q-1}z_{\q-1}z_{\q}.
\end{split}
\end{equation*}
From the latter equation we can conclude that either $z_{\q}=0$ or $z_{\q}=a_{\q-1}z_{\q-1}$.  

Now we take cases. Suppose $z_{\q}=0$. Then the equations \eqref{eq:5.1} become 
\[
\begin{split}
z_1^2&=b_1z_1z_2\\
z_i^2 &=z_i(a_{i-1}z_{i-1} +b_iz_{i+1}) \quad (i = 2,\cdots, \q-2)\\
z_{\q-1}^2&=a_{\q-2}z_{\q-2}z_{\q-1}. 
\end{split}
\]
By the induction assumption, 
the solution set of these equations consists of only the origin since \eqref{eq:5.2} is satisfied for all $1\le i\le j \le \q-2$. 

Next suppose $z_{\q}=a_{\q-1}z_{\q-1}$. In this case the equations \eqref{eq:5.1} turn into 
\begin{equation} \label{eq:5.3}
\begin{split}
z_1^2&=b_1z_1z_2\\
z_i^2 &=z_i(a_{i-1}z_{i-1} +b_iz_{i+1}) \quad (i = 2,\cdots, \q-2)\\
z_{\q-1}^2&=\frac{a_{\q-2}}{1-a_{\q-1}b_{\q-1}}z_{\q-2} z_{\q-1}. 
\end{split}
\end{equation} 
Here we know that $1-a_{\q-1}b_{\q-1}\not=0$ from the condition \eqref{eq:5.2} with $i=j=\q-1$.  Again by the induction assumption, the solution set of the equations \eqref{eq:5.3} consists of only the origin if 
\begin{equation} \label{eq:5.4}
 1-\cfrac{c'_i}{1-\cfrac{c'_{i+1}}{\qquad \cfrac{\ddots}{1-\cfrac{c'_{j-1}}{1-c'_j}}}}\not= 0 \quad\text{for all $1\le i\le j\le \q-2$},
\end{equation}
where 
\[
c'_k=c_k \  (1\leq k \leq \q-3),\qquad c'_{\q-2}=\frac{c_{\q-2}}{1-c_{\q-1}}.
\]
From the definition of the $c'_k$ it is clear that~\eqref{eq:5.4} is
equivalent to~\eqref{eq:5.2} for $i$ and $j$ with $1 \le i \le j \le
\q-3$. Further, the case $i=j=\q-2$ of~\eqref{eq:5.4} follows from the
$i=\q-2, j=\q-1$ case of~\eqref{eq:5.2}, and the case $i<j=\q-2$
of~\eqref{eq:5.4}
follows from the $i\leq j=\q-1$ case of~\eqref{eq:5.2}. Hence~\eqref{eq:5.4} holds for all choices of $i$ and $j$ and by the induction assumption the solution set consists of only the origin, as desired. 
\end{proof}

\begin{rema}
It is not difficult to see that the \lq\lq only if" part of Lemma~\ref{lemm:5.1} also holds, but we do not need this implication in what follows. 
\end{rema} 

We now return to our special case, for which $a_i=b_i=1/2$ and hence $c_i=1/4$ for all $1 \le i \le \q-1$.  Below, we give a sufficient condition for \eqref{eq:5.2} to be satisfied when $c_i=a_ib_i$ $(i=1,2,\dots,\q-1)$ is a constant $c$ independent of $i$.  This will suffice to prove Lemma~\ref{lemm:4.1}. For this purpose, consider the numerical sequence $\{x_m\}_{m=0}^\infty$ defined by the following recurrence relation and with $x_0=1$: 
\begin{equation} \label{eq:5.5}
x_{m}=1-\frac{c}{x_{m-1}} \quad\text{for $m\ge 1$}.  
\end{equation}
In the situation when the $c_i$ are all equal, it is straightforward to see that the condition \eqref{eq:5.2} is equivalent to the statement that $x_m\not=0$ for $m=1,2,\dots,\q-1$.  
We have the following. 

\begin{lemm}\label{lemm:5.2}
Let $\{x_m\}$ be the sequence defined in~\eqref{eq:5.5}. Then: 
\begin{enumerate}
\item if $0\le c\le 1/4$, then $x_m\ge (1+\sqrt{1-4c})/2$ for any $m\ge 1$, and 
\item if $c< 0$, then $x_m\ge 1$ for any $m\ge 1$.
\end{enumerate}
In particular, if $c$ is any real number $\le 1/4$, then $x_m>0$ for all  $m\ge 1$.  
\end{lemm}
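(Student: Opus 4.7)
The plan is to prove both parts by induction on $m$, exploiting the fact that the recursion $x_m = 1 - c/x_{m-1}$ is a one-dimensional dynamical system whose fixed points are precisely the roots of $x^2 - x + c = 0$.

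For part (1), with $0 \le c \le 1/4$, I would set $\alpha := (1+\sqrt{1-4c})/2 \in [1/2, 1]$. This is the larger root of $x^2 - x + c = 0$, and hence satisfies the self-referential identity $\alpha = 1 - c/\alpha$. Consider the map $f(x) := 1 - c/x$ on $(0,\infty)$. Since $c \ge 0$, $f$ is non-decreasing (its derivative $c/x^2$ is nonnegative), so once the base case is established, the inductive step becomes the one-line computation $x_m = f(x_{m-1}) \ge f(\alpha) = \alpha$. For the base case $m=1$, one has $x_1 = 1-c$, and a direct algebraic check (squaring the inequality $1-2c \ge \sqrt{1-4c}$, which is legitimate since $c \le 1/4$ makes both sides nonnegative) reduces the claim $x_1 \ge \alpha$ to $4c^2 \ge 0$. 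Note that the inductive step $x_m \ge \alpha > 0$ also guarantees that the recurrence remains well-defined at each stage.

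Part (2) is easier and requires no fixed-point machinery. For $c < 0$, I would argue by direct induction: the base case $x_1 = 1 - c > 1$ is immediate, and for the inductive step, if $x_{m-1} \ge 1 > 0$ then $-c/x_{m-1} > 0$, whence $x_m = 1 - c/x_{m-1} > 1$. The final ``in particular'' assertion then follows immediately from (1) and (2) combined: in both cases the established lower bound ($\alpha$, respectively $1$) is strictly positive.

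I do not anticipate a genuine obstacle here. The only conceptual subtlety is noticing the fixed-point identity $\alpha = 1 - c/\alpha$, which is what makes the inductive step in (1) collapse to a single application of monotonicity of $f$; once that identity is in hand the rest is routine. The condition $c \le 1/4$ is precisely what makes $\alpha$ real, so it appears naturally as the boundary of the regime where the argument works.
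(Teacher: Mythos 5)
Your proof is correct and follows essentially the same route as the paper's: an induction in which the key point is that $\alpha=(1+\sqrt{1-4c})/2$ is a fixed point of $x\mapsto 1-c/x$ and that this map is non-decreasing for $c\ge 0$ (the paper writes the identity $1-c/\alpha=\alpha$ out explicitly as $1-\tfrac{2c}{1+\sqrt{1-4c}}=\tfrac{1+\sqrt{1-4c}}{2}$ and starts the induction at $x_0=1$ rather than at $x_1=1-c$, but these are cosmetic differences). Part (2) is handled by the same direct induction in both arguments.
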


\begin{proof}
Let $0\le c\le 1/4$ and suppose that 
\begin{equation} \label{eq:5.6}
x_{m-1}\ge \frac{1+\sqrt{1-4c}}{2} > 0 \quad\text{for some $m\ge 1$}. 
\end{equation}
Then it follows from \eqref{eq:5.5} and \eqref{eq:5.6} that 
\[
x_{m}=1-\frac{c}{x_{m-1}}\ge 1-\frac{2c}{1+\sqrt{1-4c}}=\frac{1+\sqrt{1-4c}}{2}.
\]
This proves (1) in the lemma since the inequality \eqref{eq:5.6} is satisfied for $m=1$. A similar argument proves (2).  
\end{proof}

The proof of Lemma~\ref{lemm:4.1} is now straightforward. 

\begin{proof}[Proof of Lemma~\ref{lemm:4.1}]
The statement of Lemma~\ref{lemm:4.1} follows from Proposition~\ref{prop:5.1}, Lemma~\ref{lemm:5.1} and Lemma~\ref{lemm:5.2}.
\end{proof}

\end{document}